\newcommand{\HH}{{\mathbb H}}
\newcommand{\area}{{\rm area}}
\newcommand{\length}{{\rm length}}
\numberwithin{equation}{section}
\newtheorem{theorem}{Theorem}[section]
\newtheorem{proposition}[theorem]{Proposition}
\newtheorem{lemma}[theorem]{Lemma}
\theoremstyle{definition}
\newtheorem{definition}[theorem]{Definition}
\newtheorem{remark}[theorem]{Remark}
\long\def\forget#1\forgotten{} 
\begin{document}

\title[Reverse isoperimetric inequalities in nonpositively curved
  cones] 
{Sharp reverse isoperimetric inequalities in nonpositively curved cones}

\author[M.\;Katz]{Mikhail G. Katz} \address{M.~Katz, Department of
Mathematics, Bar Ilan University, Ramat Gan 5290002 Israel}
\email{katzmik@math.biu.ac.il}

\author[S.\;Sabourau]{St\'ephane Sabourau}
\address{\parbox{\linewidth}{Univ Paris Est Creteil, CNRS, LAMA, F-94010 Creteil, France \\
Univ Gustave Eiffel, LAMA, F-77447 Marne-la-Vall\'ee, France}}
\email{stephane.sabourau@u-pec.fr}

\thanks{Partially supported by the ANR project Min-Max (ANR-19-CE40-0014).}

\subjclass[2020]
{Primary 53C21; Secondary 49Q10}

\keywords{Reverse isoperimetric inequalities, Euclidean cone, nonpositive curvature, geometric inequalities, area comparison}

\begin{abstract}
We prove a pair of sharp reverse isoperimetric inequalities for
domains in nonpositively curved surfaces: (1) metric disks centered at
the vertex of a Euclidean cone of angle at least~$2\pi$ have minimal
area among all nonpositively curved disks of the same perimeter and
the same total curvature; (2) geodesic triangles in a Euclidean
(resp. hyperbolic) cone of angle at least~$2\pi$ have minimal area
among all nonpositively curved geodesic triangles (resp. all geodesic
triangles of curvature at most~$-1$) with the same side lengths and
angles.
\end{abstract}

\maketitle

\section{Introduction}

Isoperimetric inequalities provide upper bounds on the area of domains
in a surface with a fixed metric (typically of constant curvature) in
terms of their boundary length; see~\cite{BZ} for an account on this
classical subject.

Often the metric is fixed (flat, hyperbolic, or spherical), but there
are a few instances where isoperimetric inequalities hold for large
classes of metrics satisfying curvature bounds.  Thus, Andr\'e Weil
(\cite{weil}, 1926) developed such inequalities for nonpositively
curved planes, proving the Cartan-Hadamard isoperimetric conjecture in
dimension two.

In this article, we establish a pair of sharp reverse isoperimetric
inequalities providing \emph{lower bounds} on the area of some domains
in terms of their boundary length.

These geometric inequalities hold for nonpositively curved surfaces.
The optimal metrics for such reverse isoperimetric inequalities, as
well as the extremal domains, can be described in terms of the total
curvature of the domains under consideration.  The boundary cases of
equality in our optimal inequalities are attained by Euclidean cones
with nonpositive total curvature.  These inequalities provide bounds
that are stronger than Euclidean ones.

It seems that such geometric inequalities should have been known
already for some time, but we were unable to find them in the
literature.  A possible reason is that previous research focused more
on isoperimetric inequalities for special homogeneous metrics
regardless of the total curvature of the domains considered. \\

We will now present our two main results.
As motivation, note that the area of a Euclidean cone of angle less than~$2\pi$ can be easily decreased among nonnegatively curved metrics by smoothing off the tip of the cone.
Our first result shows that this is impossible while preserving the nonpositive curvature condition; see Theorem~\ref{theo:intro-isop2} for a more general statement.
Namely, one cannot decrease the area of a metric disk centered at the vertex of a Euclidean cone of angle at least~$2\pi$ among all nonpositively curved disks of the same perimeter and the same total curvature.
We will need the following definition.

\begin{definition}
Given a surface with a complete Riemannian metric with Gaussian
curvature function~$K$, we define the nonnegative and nonpositive
parts of~$K$ as
\[
K^+ = \max\{ K, 0 \} \quad \mbox{  and } \quad K^- = \max\{-K, 0 \}
\]
so that $K = K^+ - K^-$.
\end{definition}

Our convention is consistent with the corresponding definitions of
nonnegative and nonpositive parts of curvature measures introduced by
Yu.\;Burago in~\cite{Bur04}; see Definition~\ref{def:curv}.  We can
now state our first result.

\begin{theorem} 
\label{theo:intro-isop2}
Let~$M$ be a surface with a complete Riemannian metric.  Then every
disk~$D$ of radius~$R$ and boundary length~$L$ in~$M$ satisfies the
bound
\begin{equation}
\label{e11}
\area(D) \geq \area(\hat{D})
\end{equation}
where~$\hat{D}$ is the disk with the same boundary length~$L$ as~$D$,
centered at the vertex of the Euclidean cone with total curvature~$-
\int_D K^- \, dA$.
\end{theorem}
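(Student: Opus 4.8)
The plan is to compare the concentric metric balls centered at the center of~$D$ with the concentric disks in the extremal Euclidean cone, via a Bol--Fiala type differential inequality for the lengths of geodesic circles. Write~$p$ for the center of~$D$, and for $t\in[0,R]$ let $D_t=\{x:d(p,x)\le t\}$, so that $D_R=D$; put $L(t)=\length(\partial D_t)$ and $A(t)=\area(D_t)$, so $L(R)=L$ and $A(R)=\area(D)$. I first record that the Euclidean cone whose vertex carries total curvature $-\int_D K^-\,dA$ has cone angle
\[
\theta:=2\pi+\int_D K^-\,dA\ \ge\ 2\pi,
\]
and that a disk of boundary length~$L$ centered at its vertex has radius $L/\theta$ and area $L^2/(2\theta)$; thus the goal~\eqref{e11} becomes $\area(D)\ge L^2/(2\theta)$.

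Next I would set up the two analytic facts about $t\mapsto(A(t),L(t))$. By the coarea formula $|\nabla d(p,\cdot)|=1$ almost everywhere, so $A$ is absolutely continuous with $A'(t)=L(t)$ for a.e.~$t$; in particular $\area(D)=\int_0^R L(t)\,dt$ and $L(0^+)=0$. The second, crucial fact is the differential inequality $L'(t)\le\theta$ for a.e.~$t$, with $L$ having no upward jumps. Indeed, for a regular value~$t$ the sphere $\partial D_t$ is a smooth closed curve bounding~$D_t$; the first variation of arclength yields $L'(t)=\int_{\partial D_t}k_g\,ds$ up to a nonpositive correction from the cut locus of~$p$ (passing a cut point only shortens the circle), and since $D_t\subseteq D$ is planar so that $\chi(D_t)\le 1$, Gauss--Bonnet gives
\[
\int_{\partial D_t}k_g\,ds=2\pi\chi(D_t)-\int_{D_t}K\,dA\ \le\ 2\pi+\int_{D_t}K^-\,dA\ \le\ 2\pi+\int_D K^-\,dA\ =\ \theta.
\]

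It then remains to integrate. From $L'\le\theta$ and the absence of upward jumps, $t\mapsto L(t)-\theta t$ is nonincreasing on $[0,R]$; evaluating at $t=R$ and using $L(0^+)=0$ gives on one hand $L(t)\ge L-\theta(R-t)$ for all $t\in[0,R]$, and on the other hand $L\le\theta R$, so $t_0:=R-L/\theta$ lies in $[0,R]$. Since also $L(t)\ge0$, we get $L(t)\ge\max\{0,\,L-\theta(R-t)\}$, and therefore, with the substitution $u=R-t$,
\[
\area(D)=\int_0^R L(t)\,dt\ \ge\ \int_{t_0}^R\bigl(L-\theta(R-t)\bigr)\,dt=\int_0^{L/\theta}(L-\theta u)\,du=\frac{L^{2}}{2\theta}=\area(\hat D),
\]
which is~\eqref{e11}.

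The hard part will be the rigorous justification of $L'(t)\le\theta$: the distance function to~$p$ fails to be smooth along the cut locus of~$p$ and $L$ may drop there, so one has to check that the cut locus contributes only downward jumps to~$L$ (no length is added and no area is gained). This is the classical technical core of Bol--Fiala estimates -- obtained through the analysis of geodesic polar coordinates in the large (Fiala, Bol, Hartman), or, in lower regularity, via curvature measures in the spirit of Burago's~\cite{Bur04} -- whereas the remaining steps are elementary. (Tracking equality back through the argument should moreover force each inequality above to be an equality, leading to $L(t)=\max\{0,\,L-\theta(R-t)\}$ and identifying $D$ with a disk in a Euclidean cone of angle~$\theta$.)
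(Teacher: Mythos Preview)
Your argument is essentially the same as the paper's: both derive a Bol--Fiala type differential inequality for the length of concentric geodesic circles from Gauss--Bonnet (using $\chi(D_t)\le 1$ and $\omega(D_t)\ge -\omega^-(D)$), and then integrate via coarea to obtain $\area(D)\ge L^2/(2\theta)$. The paper parametrizes by the shrinking radius $R-t$ rather than the expanding radius~$t$, but this is merely a change of variable.

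The one real difference is in how the regularity issue you flag as ``the hard part'' is resolved. Rather than analyzing the cut locus directly, the paper sidesteps it by first approximating the Riemannian metric by piecewise flat metrics with conical singularities (Proposition~\ref{prop:approx}), proving the inequality in that setting where the spheres $\partial D_t$ are piecewise smooth and the first variation formula is elementary, and then passing to the limit using Proposition~\ref{prop:D_i}. This avoids the delicate Fiala--Bol--Hartman analysis you mention. Your route is also valid, but the paper's approximation argument is shorter and more self-contained here.
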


A formula for the area of~$\hat{D}$ is given in
Section~\ref{sec:disks}.

\medskip

Comparison geodesic triangles play an important role in nonpositive curvature geometry.  
For our second result, we consider a geodesic triangle of Gaussian curvature at most~$\lambda_0 \leq 0$ along with its comparison triangle (in the strong sense) in a cone~$\mathcal{C}_{\lambda_0}^\theta$ of constant nonpositive curvature~$\lambda_0$ with angle~$\theta$.
Here, a comparison triangle is taken in the following strong sense: both the angles and the side lengths of the two triangles are the same.  
The following theorem asserts that the area of the initial triangle is bounded from below by the area of its comparison triangle in the cone. 
(For simplicity, one can assume that $\lambda_0=0$.)

\begin{theorem} \label{theo:intro-triangle}
Let~$\Delta$ be a geodesic (two-dimensional) triangle in a surface with a complete Riemannian metric of Gaussian curvature~$K \leq \lambda_0$ for some constant~$\lambda_0 \leq 0$.
Suppose~$\bar{\Delta}$ is a geodesic (two-dimensional) triangle with the same side lengths and the same angles~$\alpha$,~$\beta$,~$\gamma$ as~$\Delta$ in the cone~$\mathcal{C}_{\lambda_0}^\theta$ of constant curvature~$\lambda_0$ with angle~$\theta = 3 \pi -(\alpha+\beta+\gamma)$.  
Then
\begin{equation*}
\area(\Delta) \geq \area(\bar{\Delta})
\end{equation*}
with equality if and only if~$\Delta$ is isometric to~$\bar{\Delta}$.
\end{theorem}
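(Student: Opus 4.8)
The plan is to turn the curvature bound $K\le\lambda_0$ into a Jacobi–field comparison via geodesic polar coordinates, and to reduce the inequality to a one–dimensional statement singling out $\bar\Delta$. I carry this out for $\lambda_0=0$ (the case $\lambda_0<0$ is parallel with $r$ replaced by $s_{\lambda_0}(r):=\tfrac1{\sqrt{-\lambda_0}}\sinh(\sqrt{-\lambda_0}\,r)$; note that when $\lambda_0<0$ the constancy of the curvature of $\bar\Delta$ forces its cone vertex to sit at one of its vertices, so $\bar\Delta$ is then an ordinary triangle of curvature $\lambda_0$ and the substantive case is $\lambda_0=0$). After lifting $\Delta$ to the universal cover — a Hadamard surface, since $M$ is complete with $K\le 0$ — we may assume $\Delta$ is a convex geodesic triangle, starshaped from each vertex. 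By Gauss--Bonnet, $\int_\Delta K=(\alpha+\beta+\gamma)-\pi\le 0$, so the cone angle of $\bar\Delta$ is $\theta=2\pi+\int_\Delta K^-$: its excess over $2\pi$ equals the total negative curvature of $\Delta$, just as the cone angle of $\hat D$ in Theorem~\ref{theo:intro-isop2} is $2\pi+\int_D K^-$. Since $\bar\Delta$ is a union of three constant–curvature triangles meeting at the cone point $O$, it is starshaped from $O$, and in geodesic polar coordinates based at $O$ one has $\bar\Delta=\{(r,\varphi):0\le r\le\bar\rho(\varphi),\ \varphi\in\R/\theta\Z\}$ with radial density $s_{\lambda_0}$ off $O$, hence $\area(\bar\Delta)=\int_0^\theta F(\bar\rho(\varphi))\,d\varphi$ where $F(t)=\int_0^t s_{\lambda_0}$; the profile $\bar\rho$ consists of three model–geodesic arcs glued at the angular positions of $\bar A,\bar B,\bar C$ and is determined by $a,b,c,\alpha,\beta,\gamma$.

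Now fix an interior point $P$ of $\Delta$ and cut $\Delta$ along the geodesics $PA$, $PB$, $PC$ into the sub-triangles $PAB$, $PBC$, $PCA$, each starshaped from $P$, with angular openings at $P$ summing to $2\pi$. In geodesic polar coordinates centered at $P$, the Jacobian $f$ of the exponential map solves $f_{rr}=-Kf$, $f(0)=0$, $f_r(0)=1$, so $K\le\lambda_0$ and Sturm comparison yield $f\ge s_{\lambda_0}$. Thus each sub-triangle has area at least $\int F(\rho_i(\varphi))\,d\varphi$, where $\rho_i$ is the distance from $P$ to the side of that sub-triangle opposite $P$ (one of the sides of $\Delta$). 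That side being a geodesic, $\rho_i$ satisfies the geodesic equation in these coordinates, into which the lengths $a,b,c$ and angles $\alpha,\beta,\gamma$ enter as constraints, with $f\ge s_{\lambda_0}$ used again to bound lengths from below. Summing, $\area(\Delta)\ge\sum_i\int F(\rho_i)$: an integral of $F$ of a radial profile over arcs of total angular length $2\pi$.

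The main obstacle is to show that this last quantity is at least $\int_0^\theta F(\bar\rho)=\area(\bar\Delta)$ — an inequality between an integral over angular arcs of total length $2\pi$ and one over an arc of length $\theta>2\pi$. The angular deficit $\theta-2\pi=\int_\Delta K^-$ is exactly the point: the three pieces of $\Delta$ around $P$ cannot be isometric to the three pieces $O$ cuts from $\bar\Delta$, and — in contrast to $\Delta$ — the cone triangle $\bar\Delta$ is \emph{not} starshaped from its vertices. I expect this to be handled by a convexity/rearrangement argument in the angular variable, expressing the content that, among all surfaces of curvature $\le\lambda_0$ filling the prescribed triangular boundary, the area is smallest when the total negative curvature $\int_\Delta K^-$ is concentrated at a single interior point; that surface is $\bar\Delta$. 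For the equality case, equality propagates back through the chain: $f\equiv s_{\lambda_0}$ along every radial geodesic used forces $K\equiv\lambda_0$ away from one point, equality in the profile comparison forces $\rho_i\equiv\bar\rho_i$, and together with the coinciding boundary data this produces an isometry $\Delta\cong\bar\Delta$.
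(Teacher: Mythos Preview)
Your proposal has a genuine gap at exactly the point you label ``the main obstacle.'' You set up polar coordinates at an interior point $P$ of $\Delta$, obtain a lower bound $\area(\Delta)\ge\sum_i\int F(\rho_i)$ over angular arcs of total length $2\pi$, and then must compare this to $\area(\bar\Delta)=\int_0^\theta F(\bar\rho)$ over an arc of length $\theta>2\pi$. You say you ``expect this to be handled by a convexity/rearrangement argument'' but do not supply one. This is not a routine step: the angular domains have different lengths, the profiles $\rho_i$ and $\bar\rho$ are tied to different centers, and the constraints (side lengths and vertex angles) enter the two pictures in incompatible ways. Everything substantive in the theorem is hidden in this unproved step. (Your side remark that for $\lambda_0<0$ the cone vertex must sit at a vertex of $\bar\Delta$, reducing to an ordinary hyperbolic triangle, is also incorrect: the hyperbolic cone $\mathcal{C}_{\lambda_0}^\theta$ has constant curvature $\lambda_0$ off its apex, and a geodesic triangle in it can perfectly well contain the apex in its interior.)

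The paper sidesteps the angular-deficit difficulty by decomposing from the \emph{vertices} rather than from an interior point, and by letting $\bar\Delta$ dictate the decomposition. In $\bar\Delta$ the segments from the cone point $O$ to $\bar A,\bar B,\bar C$ split the vertex angles as $\alpha=\alpha'+\alpha''$, $\beta=\beta'+\beta''$, $\gamma=\gamma'+\gamma''$ and partition $\bar\Delta$ into three constant-curvature sub-triangles. One then draws, inside $\Delta$, geodesic rays from $A,B,C$ splitting the angles in \emph{the same proportions}; these cut $\Delta$ into $\Delta_a,\Delta_b,\Delta_c$ together with a possible leftover region~$X$. Now each pair $(\Delta_a,\bar\Delta_a)$ shares a base (one side of $\Delta$) and both adjacent angles, so a direct Rauch-based comparison (Proposition~\ref{prop:rauch}: same base, same adjacent angles) gives $\area(\Delta_a)\ge\area(\bar\Delta_a)$, and summing yields $\area(\Delta)\ge\area(\bar\Delta)+\area(X)\ge\area(\bar\Delta)$. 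The missing idea in your approach is precisely this: transport the angle splittings determined by the cone point of $\bar\Delta$ to the vertices of $\Delta$, so that each sub-comparison has matched boundary data and no global angular bookkeeping is needed.
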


Our proof of Theorem~\ref{theo:intro-triangle} also shows that the area of a geodesic triangle of Gaussian curvature at most~$\lambda_0$ is bounded from below by the area of a comparison triangle having the same base length and the same adjacent angles in the plane~$\HH_{\lambda_0}$ of constant curvature~$\lambda_0 \leq 0$; see Proposition~\ref{prop:rauch}. \\



We would like to provide some context for our study of reverse
isoperimetric inequalities.  We found the geometric inequalities of
this paper while working on a proof that systolically extremal
nonpositively curved surfaces are flat with finitely many conical
singularities; see~\cite{KS}.  In that context, it was clear that it
should be imposssible to round off a conical singularity of angle
greater than~$2\pi$ in a nonsystolic region in order to decrease the
area (while keeping the nonpositive curvature condition) by any
cut-and-paste argument with metric disks of the same perimeter.  This
observation is formalized by our Theorem\;\ref{theo:intro-isop2}.
Theorem\;\ref{theo:intro-triangle} is a variation on this theme, while
trying to cut-and-paste triangles instead of disks.  Though we did not
use these reverse isoperimetric inequalities in our argument, they
confirmed our intuition that piecewise flat metrics with conical
singularities should play a role in extremal systolic geometry through
their local extremal features.

\medskip\noindent \emph{Acknowledgment.}  The second author would like
to thank the Fields Institute and the Department of Mathematics at the
University of Toronto for their hospitality while this work was
completed.

\section{Reverse isoperimetric inequality for metric disks} \label{sec:disks}

The reverse isoperimetric inequality for metric disks established in this section shows the optimality of the nonpositively curved Euclidean cones for the area with respect to compact deformations keeping the same curvature sign. \\

Before proving this result, we need to extend the notion of curvature to singular spaces.

\begin{definition} \label{def:curv}
Associated to a surface~$M$ with a complete Riemannian metric~$g$ of
Gaussian curvature~$K$ is the \emph{curvature measure}~$K \, dA$,
where $dA$ is the area measure of~$M$.  The notion of curvature
measure extends to piecewise flat surfaces with conical singularities
(and more generally to Alexandrov surfaces), where it is denoted
by~$\omega$.  The curvature measure~$\omega$ is a signed measure which
can be decomposed as $\omega = \omega^+ - \omega^-$, where $\omega^+$
and~$\omega^-$ are the nonnegative and nonpositive parts of~$\omega$,
and are both nonnegative measures.  For smooth metrics, we have
$\omega^{\pm} = K^{\pm} \, dA$.
\end{definition}

We refer to~\cite{AZ} and~\cite{res-book} for a precise definition of
the curvature measure; see also~\cite{tro} for a modern exposition on
Alexandrov surfaces.

We will make use of the following result on bi-Lipschitz metric
approximation, announced by Reshetnyak~\cite{Res59} and proved by
Yu.~Burago~\cite[Lemma~6]{Bur04}, in the more general setting of
Alexandrov surfaces; see also~\cite[Theorem~3.1.1]{BZ}, \cite{tro}
and~\cite[\S3]{KS}.

\begin{proposition}[{See~\cite{Res59} and~\cite[Lemma~6]{Bur04}}] \label{prop:approx}
Let $M$ be a compact surface (possibly with boundary) with a Riemannian metric.
Then there is a sequence~$M_i$ of piecewise flat surfaces with conical singularities converging to~$M$ in the Lipschitz topology such that the nonnegative and nonpositive parts~$\omega^{\pm}_i$ of the curvature measure~$\omega_i$ of~$M_i$ weakly converge to their counterparts~$\omega^{\pm}$ for the curvature measure~$\omega$ on~$M$.
%
\end{proposition}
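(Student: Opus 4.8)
The plan is to realize $M$ as a Lipschitz limit of piecewise flat surfaces coming from fine geodesic triangulations, and then to analyze the curvature measures face by face. Since $M$ is compact, fix $\Lambda$ with $|K|\le\Lambda$, and let $\eta$ be a modulus of continuity for $K$ (so $|K(x)-K(y)|\le\eta(d(x,y))$ with $\eta(t)\to 0$). For each $i$ I would choose a geodesic triangulation $\mathcal T_i$ of $M$ of mesh $h_i\to 0$ with $\partial M$ a subcomplex and all faces nondegenerate, and let $M_i$ be the piecewise flat surface obtained by replacing each geodesic triangle $T$ by the Euclidean triangle $\tilde T$ with the same three side lengths, glued along edges by arc length. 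For a geodesic triangle of diameter $\le h_i$ the natural comparison map $T\to\tilde T$ (for instance, interpolating geodesically from a fixed vertex) is $(1+O(\Lambda h_i^2))$-bi-Lipschitz by Rauch comparison, and these maps agree on shared edges, so they glue to a $(1+o(1))$-bi-Lipschitz homeomorphism $M\to M_i$; hence $M_i\to M$ in the Lipschitz topology. The conical singularities of $M_i$ sit at its interior vertices, and, since the geodesic angles $\alpha_{T,v}$ around an interior vertex $v$ sum to $2\pi$, its curvature measure is $\omega_i=\sum_v\bigl(\sum_{T\ni v}(\alpha_{T,v}-\tilde\alpha_{T,v})\bigr)\delta_v$, where $\alpha_{T,v}$ and $\tilde\alpha_{T,v}$ denote the angles of $T$ and $\tilde T$ at $v$.

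The core of the argument is a sharp estimate for the angle distortion $\alpha_{T,v}-\tilde\alpha_{T,v}$. For a face $T$ put $\kappa_-=\inf_T K$, $\kappa_+=\sup_T K$, and let $\beta_{T,v}(\kappa)$ be the angle at the vertex corresponding to $v$ of the comparison triangle with the same side lengths as $T$ in the model surface of constant curvature $\kappa$. Toponogov's angle comparison (using $K\ge\kappa_-$) and the $\CAT(\kappa_+)$ comparison (using $K\le\kappa_+$, valid once $h_i$ is small) give $\beta_{T,v}(\kappa_-)\le\alpha_{T,v}\le\beta_{T,v}(\kappa_+)$; since $\tilde\alpha_{T,v}=\beta_{T,v}(0)$ and $\kappa\mapsto\beta_{T,v}(\kappa)$ is monotone, both $\alpha_{T,v}$ and $\tilde\alpha_{T,v}$ lie between $\beta_{T,v}(\min(\kappa_-,0))$ and $\beta_{T,v}(\max(\kappa_+,0))$. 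Writing $\Delta_{T,v}$ for the length of that interval, Gauss--Bonnet in the two model triangles together with $\area(T_\kappa)\le C\,\area(T)$ yields
\[
|\alpha_{T,v}-\tilde\alpha_{T,v}|\le\Delta_{T,v}\quad\text{and}\quad \sum_{v}\Delta_{T,v}\le C\,\sup_T|K|\cdot\area(T),
\]
so $\sum_T\sum_v\Delta_{T,v}\le C\Lambda\area(M)$; moreover $\alpha_{T,v}-\tilde\alpha_{T,v}\ge 0$ for every vertex $v$ of $T$ when $K\ge 0$ on $T$, and $\le 0$ when $K\le 0$ on $T$. Granting this, the weak convergence $\omega_i\to\omega$ is routine: Gauss--Bonnet gives $\sum_v(\alpha_{T,v}-\tilde\alpha_{T,v})=\int_T K\,dA$ for each face, so for $\varphi\in C(M)$ with modulus of continuity $\eta_\varphi$ one gets $\bigl|\int\varphi\,d\omega_i-\int\varphi K\,dA\bigr|\le\eta_\varphi(h_i)\bigl(\sum_T\sum_v\Delta_{T,v}+\int_M|K|\,dA\bigr)\to 0$.

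The hard part — and the step I expect to be the main obstacle — is the \emph{separate} convergence of $\omega_i^+$ and $\omega_i^-$. The key reduction is that it suffices to additionally prove $|\omega_i|(M)\to|\omega|(M)=\int_M|K|\,dA$: extracting along a subsequence weak limits $\mu^\pm$ of $\omega_i^\pm$ produces nonnegative measures with $\mu^+-\mu^-=\omega$ and $(\mu^++\mu^-)(M)=|\omega|(M)$, and comparing with the Jordan decomposition $\omega=\omega^+-\omega^-$ (on the positive set $\mu^+-\omega^+=\mu^--\omega^-\ge 0$, and likewise on the negative set, so this difference is a nonnegative measure of total mass $0$, hence $0$) forces $\mu^\pm=\omega^\pm$, so the whole sequence converges. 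Now $\liminf_i|\omega_i|(M)\ge|\omega|(M)$ is just lower semicontinuity of total variation under weak convergence. For the matching upper bound I would split $\mathcal T_i$ into the faces on which $K\ge 0$, those on which $K\le 0$, and the \emph{mixed} faces, which contain a point with $K>0$ and a point with $K<0$, hence a zero of $K$, so that $\sup_T|K|\le\eta(h_i)$ there. The $K\ge 0$- and $K\le 0$-faces contribute to each $\omega_i(\{v\})$ terms of a single sign (by the last remark of the previous paragraph), whose absolute values sum — via $\sum_v(\alpha_{T,v}-\tilde\alpha_{T,v})=\int_T K\,dA$ — to at most $\int_M K^+\,dA$ and $\int_M K^-\,dA$ respectively, while the mixed faces contribute at most $\sum_{\text{mixed }T}\sum_v\Delta_{T,v}\le C\,\eta(h_i)\,\area(M)\to 0$. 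Hence $\limsup_i|\omega_i|(M)\le\int_M|K|\,dA$, as needed.

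I expect the subtlety to be concentrated exactly in this last step. Note that one cannot improve $\sum_v\Delta_{T,v}=O(\sup_T|K|\cdot\area(T))$ to $O(\mathrm{osc}_T K\cdot\area(T))$ — that would force $|\omega_i|(M)\to 0$ and hence $\omega\equiv 0$ — so the angle distortion at a vertex is genuinely of order $|K|\cdot\area$, not negligible; what rescues the separate convergence is that Toponogov/$\CAT$ comparison pins down the sign of the contribution of every face \emph{except} the mixed ones, and each mixed face is harmless precisely because it straddles a zero of $K$, where $|K|$ is small. The bi-Lipschitz approximation of the first paragraph and the bookkeeping with moduli of continuity are standard; the structural content is this interplay between comparison geometry (for signs) and the uniform continuity of $K$ (for the mixed faces).
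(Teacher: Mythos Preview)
The paper does not give its own proof of this proposition: it is quoted as a known result, announced by Reshetnyak and established by Yu.~Burago \cite[Lemma~6]{Bur04} in the more general setting of Alexandrov surfaces, and is used as a black box. Your proposal is therefore not in competition with any argument in the paper.

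On its own merits, your outline is essentially a correct direct proof in the smooth setting. The construction of~$M_i$ from a fine geodesic triangulation with Euclidean comparison faces, the bi-Lipschitz control via Rauch, and the identification $\omega_i(\{v\})=\sum_{T\ni v}(\alpha_{T,v}-\tilde\alpha_{T,v})$ are standard. Your main step --- reducing the separate convergence $\omega_i^{\pm}\rightharpoonup\omega^{\pm}$ to $|\omega_i|(M)\to|\omega|(M)$, and then bounding $|\omega_i|(M)$ by splitting the faces into those with $K\ge0$, those with $K\le0$, and mixed faces, using Toponogov/$\CAT$ comparison to pin down the sign of each vertex contribution on the pure-sign faces and the modulus of continuity of~$K$ to make the mixed faces negligible --- is exactly the right idea and goes through. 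A few technical points are glossed over but are routine to repair: the face-wise comparison maps as you describe them (interpolating from a chosen vertex) need not literally agree on shared edges, so one should instead fix the map on the $1$-skeleton by arc length and extend; the $(1+O(\Lambda h_i^2))$ bi-Lipschitz constant requires uniform nondegeneracy of the faces, which must be built into the choice of~$\mathcal T_i$; and when $\partial M\neq\emptyset$ the regrouping by faces must track that boundary vertices do not contribute to~$\omega_i$, though your sign control on pure-sign faces still yields the needed upper bound. Compared with Burago's argument, which is organized for Alexandrov surfaces of bounded integral curvature, your approach is more elementary but restricted to the Riemannian case, which is all the paper actually needs.
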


Consider a sequence~$(g_i)$ of piecewise flat metrics with conical
singularities approximating a given complete Riemannian metric~$g$ on
a surface for every compact domain as in
Proposition~\ref{prop:approx}.  Denote by~$M$ the surface with the
complete Riemannian metric~$g$, and by~$M_i$ the same surface with the
piecewise flat metric~$g_i$.

\begin{proposition} \label{prop:D_i}
Fix $p \in M$.  Let $D \subseteq M$ and $D_i \subseteq M_i$ be the
disks of radius~$R$ centered at the same point~$p$.  
Then
\begin{enumerate}
\item the area of the symmetric difference~$D \Delta D_i$ tends to
zero (for every area measure).  That is, $| D \Delta D_i | \to 0$;
\label{item1}
\item  $\lim \area(D_i,g_i) = \area(D,g)$; \label{item2}
\item $\lim \omega^{\pm}_i(D_i) = \omega^{\pm}(D)$; \label{item3}
\item $\liminf \length(\partial D_i,g_i) \geq \length(\partial D,g)$. \label{item4}
\end{enumerate}
\end{proposition}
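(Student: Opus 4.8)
My plan is to pull everything back to the fixed Riemannian metric~$g$ via the Lipschitz convergence and to deduce the four assertions from soft ingredients: the behaviour of Hausdorff measures under bi-Lipschitz maps, the continuity of $r\mapsto\area\big(B^g_r(p)\big)$ together with the $dA_g$-negligibility of the distance sphere, weak convergence of the curvature measures, and lower semicontinuity of perimeter. First I would fix a compact neighbourhood~$\Omega$ of~$\overline{D}$ (say $\Omega=\overline{B^g_{2R}(p)}$, compact by completeness) and record a sequence $\eps_i\to0^+$ with $(1+\eps_i)^{-1}d_g\le d_{g_i}\le(1+\eps_i)\,d_g$ on~$\Omega$, which is possible by the Lipschitz convergence. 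Since the identity map is $(1+\eps_i)$-Lipschitz from $(\Omega,g)$ to $(\Omega,g_i)$ and back, and a $C$-Lipschitz map~$f$ satisfies $\mathcal H^k(f(A))\le C^k\,\mathcal H^k(A)$, one gets $(1+\eps_i)^{-k}\mathcal H^k_g\le\mathcal H^k_{g_i}\le(1+\eps_i)^k\mathcal H^k_g$ on subsets of~$\Omega$ for $k=1,2$; since $\mathcal H^2$ is the area (smooth or piecewise flat) and $\mathcal H^1$ computes length along the rectifiable sets in play, the $g$- and $g_i$-area (resp.\ length) measures are comparable with ratio tending to~$1$. The same bound gives the sandwich $B^g_{R/(1+\eps_i)}(p)\subseteq D_i\subseteq B^g_{R(1+\eps_i)}(p)$, so $D\Delta D_i$ lies in the $g$-annulus $A_i:=B^g_{R(1+\eps_i)}(p)\setminus B^g_{R/(1+\eps_i)}(p)$, which shrinks onto $\partial D=\rho_g^{-1}(R)$, where $\rho_g:=d_g(p,\cdot)$. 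Since $g$ is smooth, $|\grad\rho_g|=1$ almost everywhere, so the coarea formula makes $r\mapsto\area\big(B^g_r(p)\big)$ continuous and the distance sphere $dA_g$-null, hence also $\omega^\pm$-null (recall $\omega^\pm=K^\pm\,dA_g$). Then assertion~\eqref{item1} is immediate: $\area(A_i,g)\to0$, hence $\area(D\Delta D_i,g_i)\le(1+\eps_i)^2\area(D\Delta D_i,g)\to0$; and assertion~\eqref{item2} follows by combining it with $\area(D,g_i)\to\area(D,g)$, the comparison of area measures.

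For assertion~\eqref{item3}, weak convergence $\omega^\pm_i\rightharpoonup\omega^\pm$ together with $\omega^\pm(\partial D)=0$ yields $\omega^\pm_i(D)\to\omega^\pm(D)$ by the portmanteau theorem, so it suffices to show $\omega^\pm_i(D\Delta D_i)\to0$. Given $\delta>0$, for~$i$ large one has $D\Delta D_i\subseteq A_i\subseteq A^\delta:=\{R-\delta\le\rho_g\le R+\delta\}$, which is compact, so $\limsup_i\omega^\pm_i(D\Delta D_i)\le\limsup_i\omega^\pm_i(A^\delta)\le\omega^\pm(A^\delta)$; letting $\delta\to0$, $\omega^\pm(A^\delta)\downarrow\omega^\pm(\partial D)=0$. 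Hence $\omega^\pm_i(D_i)=\omega^\pm_i(D)+o(1)\to\omega^\pm(D)$.

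Assertion~\eqref{item4} is the delicate one, and I would route it through the perimeter functional of the fixed metric~$g$. In the piecewise flat surface~$M_i$ the distance sphere $\partial D_i$ is a finite union of rectifiable arcs, so $D_i$ has finite perimeter and
\[
\length(\partial D_i,g_i)=\mathcal H^1_{g_i}(\partial D_i)\geq(1+\eps_i)^{-1}\mathcal H^1_g(\partial D_i)\geq(1+\eps_i)^{-1}\,\mathrm{Per}_g(D_i),
\]
the last step because the $g$-reduced boundary lies in the topological boundary. By assertion~\eqref{item1}, $\chi_{D_i}\to\chi_D$ in $L^1(\Omega,dA_g)$, so lower semicontinuity of the $g$-perimeter gives $\liminf_i\mathrm{Per}_g(D_i)\ge\mathrm{Per}_g(D)$; and since $\partial D$ is a rectifiable Jordan curve (as in Theorem~\ref{theo:intro-isop2}), the disk~$D$ has density~$\tfrac12$ at $\mathcal H^1_g$-almost every boundary point, so $\mathrm{Per}_g(D)=\mathcal H^1_g(\partial D)=\length(\partial D,g)$. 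Chaining the displayed inequalities and letting $\eps_i\to0$ yields assertion~\eqref{item4}. I expect this last step to be the main obstacle: boundary length is only lower semicontinuous, not continuous, under the relevant convergence, and even this lower bound can fail for a metric ball whose topological boundary is ``doubled'' (touches itself along a curve)---a pathology excluded precisely by the Jordan-curve hypothesis on~$D$.
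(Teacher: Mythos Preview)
Your argument is correct and tracks the paper's proof closely for items~\eqref{item1}--\eqref{item3}: both trap $D\Delta D_i$ in a shrinking tube around $\partial D$ via the bilipschitz comparison, then use that this tube has vanishing area (and hence vanishing $\omega^\pm$-measure, by absolute continuity $\omega^\pm=K^\pm\,dA$). The only notable difference is item~\eqref{item4}. The paper treats $\partial D$ and $\partial D_i$ as the current boundaries of the integral $2$-currents $[D]$, $[D_i]$, bounds their flat distance by the mass of $[D]-[D_i]$ (namely $|D\Delta D_i|\to0$), and invokes lower semicontinuity of mass under flat convergence; you instead use the $L^1$-convergence $\chi_{D_i}\to\chi_D$ together with lower semicontinuity of the BV perimeter $\mathrm{Per}_g$. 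Since the mass of the current boundary of $[D]$ is precisely $\mathrm{Per}_g(D)$, and flat convergence of these currents is equivalent to $L^1$-convergence of the indicators, these are two dialects of the same argument and neither buys anything the other does not. Your closing identification $\mathrm{Per}_g(D)=\mathcal H^1_g(\partial D)$ under the Jordan-curve hypothesis simply makes explicit what the paper leaves tacit in writing ``$\length(\partial D)$'' for the mass of the boundary cycle.
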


\begin{proof}
\eqref{item1} By bilipschitz convergence of the metrics, the symmetric
difference~$D \Delta D_i$ is contained in the $\varepsilon$-tubular
neighborhood~$U_\varepsilon(\partial D)$ of~$\partial D$ for $i$ large
enough.  Since $\partial D$ is $1$-rectifiable, the area of this
tubular neighborhood tends to zero (see~\cite[Theorem~3.2.39]{fed}),
and the result follows.

\eqref{item2} By bilipschitz convergence of the metrics, we have the inclusions
\[
D - \varepsilon \subseteq D_i \subseteq D + \varepsilon
\]
for $i$ large enough, where $D \pm \varepsilon \subseteq M$ are the balls of radius~$R \pm \varepsilon$ centered at~$p$.
We also have weak convergence of the area measures.
Taking the area with respect to~$g_i$ in the previous double inclusion between $D_i$ and~$D \pm \varepsilon$, and using the weak convergence of the area measure, we obtain
\[
\area(D -\varepsilon,g) - \varepsilon \leq \area(D_i,g_i) \leq \area(D+\varepsilon,g) + \varepsilon
\]
for $i$ large enough.  Since the area of the $\varepsilon$-tubular
neigborhood~$U_\varepsilon(\partial D)$ of~$\partial D$ tends to zero,
the result is immediate.

\eqref{item3} As in the proof of item~\eqref{item2}, using the weak
convergence of the curvature measure instead of the area measures, we
obtain
\[
\omega^{\pm}(D -\varepsilon) - \varepsilon \leq \omega^{\pm}_i(D_i) \leq \omega^{\pm}(D+\varepsilon) + \varepsilon
\]
for $i$ large enough.  Since the curvature measure
$\omega^{\pm} = K^{\pm} \, dg$ is absolutely continuous with
respect to the area measure, the curvature measure of the
$\varepsilon$-tubular neighborhood~$U_\varepsilon(\partial D)$
of~$\partial D$ tends to zero.  Hence the result.

\eqref{item4} The flat distance between the one-cycles~$\partial D$ and~$\partial D_i$ is bounded by the mass of the $2$-current defined as the difference~$D-D_i$, see~\cite{fed} for precise definitions.
This mass is equal to the area of~$D \Delta D_i$.
Thus, by~\eqref{item1}, the sequence~$\partial D_i$ converges to~$\partial D$ in the flat topology.
The desired result follows from the lower semicontinuity of the mass (here, the length); see~\cite{fed}.
\end{proof}

We can now proceed to the proof of the following theorem.  Recall that
a disk of radius~$R$ centered at the vertex of a Euclidean cone of
angle~$\theta$ has perimeter~$L=\theta R$ and area $A=\frac{\theta}{2}
R^2 = \frac{L^2}{2\theta}$.  Thus, we have
\begin{equation} 
\label{eq:cone}
A = \frac{L^2}{2(2\pi- \mathcal{K})}
\end{equation}
where $\mathcal{K} = 2\pi -\theta<0$ is the total curvature of the
Euclidean cone.

\medskip

With this formula, Theorem~\ref{theo:intro-isop2} can be restated as follows.

\begin{theorem} \label{theo:isop2}
Let~$M$ be a surface with a complete Riemannian metric.
Then every disk~$D$ of radius~$R$ of boundary length~$L$ in~$M$ satisfies
\[
\area(D) \geq \frac{L^2}{4\pi + 2\mathcal{K}_D^-}
\]
where~$\mathcal{K}_D^- = \int_D K^- \, dA \geq 0$ is the total mass of the nonpositive part of the curvature measure of~$D$.
\end{theorem}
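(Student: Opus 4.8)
The plan is to reduce the statement to the case of a piecewise flat metric with conical singularities -- where it becomes nearly immediate -- and then to pass to the limit using Propositions~\ref{prop:approx} and~\ref{prop:D_i}. Concretely, assume first that the bound
\[
\area(B_p(R))\geq\frac{\length(\partial B_p(R))^2}{4\pi+2\,\omega^-(B_p(R))}
\]
holds on every piecewise flat surface with conical singularities. For a general complete Riemannian surface $M$ and $D=B_p(R)$, I would approximate $M$ by piecewise flat surfaces $M_i$ with conical singularities as in Proposition~\ref{prop:approx} and set $D_i=B_p(R)\subseteq M_i$. Applying the piecewise flat bound to each $(M_i,D_i)$ and letting $i\to\infty$, Proposition~\ref{prop:D_i} gives $\area(D_i,g_i)\to\area(D,g)$, $\omega_i^-(D_i)\to\omega^-(D)=\mathcal{K}_D^-$ and $\liminf_i\length(\partial D_i,g_i)\geq\length(\partial D,g)=L$; since $4\pi+2\,\omega_i^-(D_i)\to 4\pi+2\,\mathcal{K}_D^->0$, passing to the $\liminf$ on both sides of the inequality yields $\area(D,g)\geq L^2/(4\pi+2\,\mathcal{K}_D^-)$.

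For the piecewise flat case, let $p$ be the centre of $D=B_p(R)$, let $\theta_p$ be the total angle at $p$ (so $\theta_p=2\pi$ if $p$ is a smooth point), and for $r>0$ let $u(r)$ be the angular measure of the set $U(r)$ of unit vectors $v$ at $p$ such that $t\mapsto\exp_p(tv)$ is minimizing on $[0,r]$. The function $u$ is non-increasing, since $U(r)$ shrinks as $r$ grows. The crux is the identity
\[
\length(\partial B_p(r))=r\,u(r)\qquad\text{for a.e. }r\in(0,R).
\]
Indeed, in a piecewise flat surface the geodesics issuing from $p$ in a full-measure set of directions miss every conical singularity, so the Jacobi field along such a ray vanishing at $p$ is the flat one, with norm $t$ at time $t$; combined with the fact that, for a.e.\ $r$, the map $v\mapsto\exp_p(rv)$ identifies $U(r)$ with $\partial B_p(r)$ up to a null set, this gives the formula.

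Granting the identity, the coarea formula and the monotonicity of $u$ give
\[
\area(D)=\int_0^R\length(\partial B_p(r))\,dr=\int_0^R r\,u(r)\,dr\;\geq\;u(R)\int_0^R r\,dr=\tfrac12\,R^2\,u(R).
\]
Since $\length(\partial D)=R\,u(R)$, this reads $\area(D)\geq\length(\partial D)^2/(2\,u(R))$, and it remains to bound $u(R)\leq 2\pi+\omega^-(D)$. One always has $u(R)\leq\theta_p$; if $\theta_p>2\pi$ then $p$ is a conical singularity of curvature $2\pi-\theta_p<0$, hence (as $p\in D$) $\omega^-(D)\geq\theta_p-2\pi$ and $u(R)\leq\theta_p\leq 2\pi+\omega^-(D)$, whereas if $\theta_p\leq 2\pi$ the bound $u(R)\leq 2\pi\leq 2\pi+\omega^-(D)$ is trivial. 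Plugging this in yields $\area(D)\geq\length(\partial D)^2/(2(2\pi+\omega^-(D)))=\length(\partial D)^2/(4\pi+2\,\omega^-(D))$, the piecewise flat form of the theorem.

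The computation is short; the point needing care is the identity $\length(\partial B_p(r))=r\,u(r)$ -- that is, the cut-locus bookkeeping (the null set of directions meeting a singularity, and the fact that for a.e.\ $r$ the cut points on $\partial B_p(r)$ are negligible), the validity of the coarea formula for the distance function on a piecewise flat surface, and the treatment of the finitely or countably many exceptional radii via a continuity argument. It is also worth emphasising that the approximation step is essential rather than cosmetic: for a general smooth metric the ratio $\length(\partial B_p(r))/r$ need not be monotone in $r$ -- it is strictly increasing in the hyperbolic plane -- so the monotonicity on which the argument rests is available only after passing to piecewise flat metrics, where this ratio is precisely the non-increasing function $u(r)$.
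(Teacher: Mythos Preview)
Your reduction to the piecewise flat case via Propositions~\ref{prop:approx} and~\ref{prop:D_i} matches the paper and is fine. The gap is in the piecewise flat step itself: the identity
\[
\length(\partial B_p(r)) = r\,u(r)
\]
is \emph{false} as soon as $B_p(r)$ contains a conical singularity of angle larger than~$2\pi$ at a point other than~$p$. The reason is that the map $v\mapsto \exp_p(rv)$ does not cover~$\partial B_p(r)$: behind each negatively curved singularity there is a ``shadow'' region whose points are joined to~$p$ only by paths through the singular vertex, not by any smooth geodesic from~$p$. Concretely, take $p$ at distance $d>0$ from the apex~$O$ of a Euclidean cone of angle $\theta>2\pi$. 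Then $p$ is a smooth point, every geodesic from~$p$ (missing~$O$) is minimizing since the cone is $\CAT(0)$, and hence $u(r)=2\pi$ for all $r$. But for $r>d$ one computes directly
\[
\length(\partial B_p(r)) \;=\; 2\pi r \;+\; (\theta-2\pi)(r-d) \;>\; 2\pi r \;=\; r\,u(r),
\]
the extra term being the length of the shadow arc at radius $r-d$ from~$O$.

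In fact your argument, if it went through, would prove the stronger bound $\area(D)\geq L^2/(2\theta_p)$, which depends only on the cone angle at the \emph{centre} and ignores all other curvature in~$D$. In the example above with $d$ small one has $\theta_p=2\pi$, $L\approx\theta R$ and $\area(D)\approx\tfrac{\theta}{2}R^2$, so $L^2/(2\theta_p)\approx \theta^2 R^2/(4\pi)>\tfrac{\theta}{2}R^2$; the stronger inequality is violated. The paper's proof avoids this by differentiating $L(\mathcal{C}_t)$ via the first variation formula and applying Gauss--Bonnet on each sublevel set~$D_t$, which brings in the \emph{total} negative curvature~$\omega^-(D)$ and yields $L'(\mathcal{C}_t)\geq -2\pi-\omega^-(D)$; integrating this and then applying coarea gives the bound. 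Your monotonicity-of-angular-measure idea captures only the curvature concentrated at~$p$ and cannot see the rest.
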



\begin{proof}
Proposition~\ref{prop:D_i} shows that it is sufficient to prove
Theorem~\ref{theo:isop2} for piecewise flat metrics with conical
singularities.  The desired result for Riemannian metrics will follow
by piecewise flat metric approximation; see
Proposition~\ref{prop:approx}.  This enables us to avoid regularity
issues.


Let~$p$ be the center of the disk~$D \subseteq M$ of radius~$R$.
Consider the (closed) disk~$D_t$ and the circle~$\mathcal{C}_t$ of
radius~$R-t$ centered at~$p$.  Note that~$D_0=D$
and~$\mathcal{C}_0=\partial D$.  Since the metric on~$M$ is piecewise
flat, the circle~$\mathcal{C}_t$ is a piecewise smooth curve (possibly with several connected components)
bounding the metric disk~$D_t$ in~$M$.  Moreover, the length
function~$t \mapsto L(\mathcal{C}_t)$ is differentiable except for a
finite number of values of~$t$, and its derivative is given by the
first variation formula; see~\cite[Lemma~3.2.3]{BZ}.  Namely, as long
as~$D_t$ is nonempty, we have
\[
L'(\mathcal{C}_t) = - \int_{\mathcal{C}_t} \kappa(s) \, ds - S_t
\]
for almost every~$t$, where~$\kappa$ is the geodesic curvature of the curve~$\mathcal{C}_t$ and~$S_t$ is the sum of the angular difference of the tangent vectors at the corner points of~$\mathcal{C}_t$.

By the Gauss--Bonnet formula for polyhedral metrics
(see~\cite[Theorem~5.3.2]{res-book}), we derive
\[
L'(\mathcal{C}_t) = - 2 \pi \chi(D_t) + \omega(D_t).
\]
Since $D_t$ is a connected region with boundary, its Euler characteristic~$\chi(D_t)$ is at most~$1$, that is, $\chi(D_t) \leq 1$.
Since $D_t \subseteq D$ and $\omega = \omega^+ - \omega^-$, we have
\[
\omega(D_t) \geq - \omega^-(D).
\]
Combining these two bounds, we deduce that
\[
L'(\mathcal{C}_t) \geq - 2 \pi - \omega^-(D).
\]
Integrating this relation leads to
\begin{equation} 
\label{eq:LCt2}
L(\mathcal{C}_t) \geq L(\partial D) - \left( 2 \pi + \omega^-(D) \right) \, t.
\end{equation}
In particular, the domain~$D_t$ is nonempty for every~$t<t_0$, where
\[
t_0 = \frac{L(\partial D)}{2 \pi + \omega^-(D)}.
\]
By the coarea formula, integrating the inequality~\eqref{eq:LCt2}
between~$0$ and~$t_0$, we obtain
\[
\area(D) \geq L(\partial D) \, t_0 - \frac{1}{2} \, \left( 2 \pi + \omega^-(D) \right) t_0^2.
\]
In other words, we have
\[
\area(D) \geq \frac{L(\partial D)^2}{2 \left( 2 \pi + \omega^-(D) \right)}
\]
where the right-hand side represents the area of the disk centered at
the vertex of the Euclidean cone with total curvature~$\mathcal{K} = -
\omega^-(D)$ and with the same boundary length~$L(\partial D)$ as~$D$;
see formula~\eqref{eq:cone}.
\end{proof}

\section{Area comparison for triangles with the same base}

In order to prove Theorem~\ref{theo:intro-triangle}, we will need the following result, which may be of independent interest.  
This result provides a lower bound on the area of a geodesic triangle of Gaussian curvature at most~$\lambda_0$, in terms of the area of a comparison triangle having the same base length with the same adjacent angles in the plane~$\HH_{\lambda_0}$ of constant curvature~$\lambda_0 \leq 0$.

\begin{proposition} \label{prop:rauch}
Let~$\Delta$ be a geodesic (two-dimensional) triangle with vertices~$A$,~$B$,~$C$ in a surface~$M$ with a complete Riemannian metric of Gaussian curvature~$K \leq \lambda_0$ for some constant~$\lambda_0 \leq 0$.
Let~$\bar{\Delta}$ be a geodesic (two-dimensional) triangle with distinct vertices~$\bar{A}$,~$\bar{B}$,~$\bar{C}$ in the plane~$\HH_{\lambda_0}$ of constant curvature~$\lambda_0$ such that
\begin{itemize}
\item the sides~$\bar{A} \bar{B}$ and~$AB$ have the same length;
\item the angles at~$\bar{A}$ and~$A$ are the same;
\item the angles at~$\bar{B}$ and~$B$ are the same.
\end{itemize}
Then
\begin{equation} \label{eq:rauch}
\area(\Delta) \geq \area(\bar{\Delta})
\end{equation}
with equality if and only if~$\Delta$ is isometric to~$\bar{\Delta}$.
\end{proposition}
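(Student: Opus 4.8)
The plan is to develop $\Delta$ in geodesic polar coordinates $(r,\phi)$ based at the vertex $A$ and compare it radially with the development of $\bar\Delta$ based at $\bar A$. Let $\phi\in[0,\alpha]$ denote the angle measured from the side $AB$. Since $K\le\lambda_0\le 0$, the exponential map at $A$ has no conjugate points, and the geodesic rays issuing from $A$ foliate $\Delta$ (a standard consequence of nonpositive curvature, i.e.\ of the $\CAT(\lambda_0)$ inequality): the ray at angle $\phi$ meets the opposite side $BC$ in a single point $P(\phi)$. Writing $r(\phi)=d\bigl(A,P(\phi)\bigr)$, the solid triangle is $\{0\le\phi\le\alpha,\ 0\le r\le r(\phi)\}$ and the metric is $dr^2+\varphi(r,\phi)^2\,d\phi^2$, where $\varphi(\cdot,\phi)$ solves the Jacobi equation $\partial_r^2\varphi+K\varphi=0$ with $\varphi(0,\phi)=0$, $\partial_r\varphi(0,\phi)=1$. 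Hence $\area(\Delta)=\int_0^\alpha\!\int_0^{r(\phi)}\varphi(r,\phi)\,dr\,d\phi$, and the same construction in $\HH_{\lambda_0}$ gives $\area(\bar\Delta)=\int_0^\alpha\!\int_0^{\bar r(\phi)}s_{\lambda_0}(r)\,dr\,d\phi$, where $s_{\lambda_0}$ solves $s''+\lambda_0 s=0$, $s(0)=0$, $s'(0)=1$, and $\bar r(\phi)$ is the analogous exit radius.

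Two comparison inputs drive the argument. First, Rauch's comparison theorem (whence the name of the proposition): from $K\le\lambda_0$ one obtains $\varphi(r,\phi)\ge s_{\lambda_0}(r)$ and $\partial_r\varphi(r,\phi)\ge s_{\lambda_0}'(r)$ throughout $\Delta$, with equality at a point forcing $K\equiv\lambda_0$ along the radial geodesic up to that point. Second, the Gauss--Bonnet formula applied to the subtriangle $\Delta_\phi$ with vertices $A$, $B$, $P(\phi)$ (all sides geodesic) gives $\int_{\Delta_\phi}K\,dA=\phi+\beta+\theta(\phi)-\pi$, where $\theta(\phi)$ is the angle of $\Delta_\phi$ at $P(\phi)$; since $K\le\lambda_0\le 0$ this yields $\theta(\phi)\le\pi-\phi-\beta+\lambda_0\,\area(\Delta_\phi)$, to be compared with the exact model identity $\bar\theta(\phi)=\pi-\phi-\beta+\lambda_0\,\area(\bar\Delta_\phi)$. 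The first variation of arclength along $BC$ supplies moreover $r'(\phi)=\varphi(r(\phi),\phi)\cot\theta(\phi)$ and $\bar r'(\phi)=s_{\lambda_0}(\bar r(\phi))\cot\bar\theta(\phi)$, with common initial data $r(0)=\bar r(0)=|AB|$ and $\theta(0)=\bar\theta(0)=\pi-\beta$.

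One then proves $\area(\Delta_\phi)\ge\area(\bar\Delta_\phi)$ for every $\phi\in[0,\alpha]$ by a continuity argument in $\phi$, the case $\phi=\alpha$ being the assertion. The inequality holds near $\phi=0$ because $\tfrac{d}{d\phi}\area(\Delta_\phi)\big|_0=\int_0^{|AB|}\varphi(r,0)\,dr\ge\int_0^{|AB|}s_{\lambda_0}(r)\,dr=\tfrac{d}{d\phi}\area(\bar\Delta_\phi)\big|_0$. At a first angle $\phi_0$ where it would be lost one has $\area(\Delta_{\phi_0})=\area(\bar\Delta_{\phi_0})$, hence $\theta(\phi_0)\le\bar\theta(\phi_0)$ by the Gauss--Bonnet bound, and one must again establish $\int_0^{r(\phi_0)}\varphi(r,\phi_0)\,dr\ \ge\ \int_0^{\bar r(\phi_0)}s_{\lambda_0}(r)\,dr$ — an equality here would propagate the constant-curvature rigidity backward, so the process restarts. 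I expect this last estimate to be the main obstacle, and it cannot be carried out naively: it is \emph{false} that $r(\phi)\ge\bar r(\phi)$ in general — already for $\phi\to 0^+$ the reverse holds when $\beta<\tfrac{\pi}{2}$, since $\varphi(|AB|,0)\ge s_{\lambda_0}(|AB|)$ forces $r'(0)\le\bar r'(0)$. The mechanism that saves the inequality is that wherever the true exit radius falls short of the model one, the excess $\varphi-s_{\lambda_0}$ over the interior of the radial segment overcompensates. To make this precise I would track the Clairaut-type quantity $s_{\lambda_0}(\rho(t))\sin\theta(t)$ along $BC$, parametrized by arclength $t$, with $\rho(t)=d(A,\sigma(t))$ and $\theta$ the angle to $A$: it is constant in the model (equal to $s_{\lambda_0}(d(\bar A,\bar B\bar C))$), while on $M$ the curvature bound $K\le\lambda_0$ makes it nondecreasing before, and nonincreasing after, the point of $BC$ closest to $A$ — which already gives $d(A,BC)\ge d(\bar A,\bar B\bar C)$ — and more careful bookkeeping of this quantity together with the Gauss--Bonnet control of $\theta(\phi)$ bounds the exit data of $BC$ relative to that of $\bar B\bar C$ well enough to feed into the estimate for $\int_0^{r(\phi_0)}\varphi\,dr$. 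Finally, the equality case follows by tracing equality through the argument: it forces $\varphi\equiv s_{\lambda_0}$ on $\Delta$, hence $K\equiv\lambda_0$, and a geodesic triangle of constant curvature $\lambda_0$ with the prescribed base length and base angles is isometric to $\bar\Delta$.
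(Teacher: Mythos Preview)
Your continuity argument does not close. At the first contact angle $\phi_0$ you need the derivative inequality $\int_0^{r(\phi_0)}\varphi(r,\phi_0)\,dr \ge \int_0^{\bar r(\phi_0)} s_{\lambda_0}(r)\,dr$, and you have correctly identified that $r(\phi_0) < \bar r(\phi_0)$ is possible; the Gauss--Bonnet bound $\theta(\phi_0)\le\bar\theta(\phi_0)$ you extract from the area equality does not by itself control $r(\phi_0)$. The proposed remedy via the quantity $s_{\lambda_0}(\rho(t))\sin\theta(t)$ along $BC$ is not justified: Clairaut's relation is a feature of \emph{rotationally symmetric} metrics, whereas the polar metric $dr^2+\varphi(r,\phi)^2\,d\phi^2$ about $A$ has $\varphi$ depending on $\phi$. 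The evolution of $\theta$ along $BC$ is governed by $\partial_r\varphi/\varphi$, not by $s_{\lambda_0}'/s_{\lambda_0}$, so the hybrid quantity you write down has no evident monotonicity under the hypothesis $K\le\lambda_0$ alone. Your own phrasing (``I would track'', ``more careful bookkeeping'') signals that this step is a hope rather than an argument, and it is precisely the step where the whole difficulty is concentrated.

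The paper sidesteps this obstruction. It first treats the special case where one base angle, say $\beta$, satisfies $\beta\ge\tfrac{\pi}{2}$: there one introduces the side-length comparison triangle $\hat\Delta\subset\HH_{\lambda_0}$ (same three side lengths as $\Delta$, hence $\hat\alpha\ge\alpha$, $\hat\beta\ge\beta\ge\tfrac{\pi}{2}$, and therefore $\hat\gamma\le\tfrac{\pi}{2}$), observes that $\bar\Delta\subset\hat\Delta$, and uses $\hat\gamma\le\tfrac{\pi}{2}$ to deduce $|\bar A\bar D|\le|AD|$ for every $D\in BC$. Thus $\sigma=\exp_A\circ I\circ\exp_{\bar A}^{-1}$ carries $\bar\Delta$ \emph{into} $\Delta$, and Rauch (making $\sigma$ distance-nondecreasing) yields the area inequality at once---no continuity argument needed. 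The general case is then reduced to this one by dropping the altitude from $C$ onto $AB$, splitting $\Delta$ into two right triangles to which the special case applies, and reassembling the two model right triangles in $\HH_{\lambda_0}$ to cover $\bar\Delta$. The phenomenon you flagged---$r(\phi)<\bar r(\phi)$ when $\beta<\tfrac{\pi}{2}$---is exactly what the altitude decomposition is designed to eliminate.
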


We first establish the proposition when one of the angles at~$A$ or~$B$, say~$B$, is greater or equal to~$\frac{\pi}{2}$; see Lemma~\ref{lem:baby}.
We will then derive the general result from this particular case.

\begin{lemma} \label{lem:baby}
Let $\Delta$ and~$\bar{\Delta}$ be as in Proposition~\ref{prop:rauch}.
Let $\alpha$, $\beta$, $\gamma$ be the angles of~$\Delta$ at~$A$, $B$, $C$.
Suppose that $\beta \geq \frac{\pi}{2}$.
Then
\begin{equation*}
\area(\Delta) \geq \area(\bar{\Delta})
\end{equation*}
with equality if and only if~$\Delta$ is isometric to~$\bar{\Delta}$.
\end{lemma}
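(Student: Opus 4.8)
The plan is to reduce everything to a one-parameter comparison along the side $AB$ using the exponential map and Rauch-type Jacobi field estimates. Fix the base $AB$, of common length $c$, parametrized by arclength, and think of the triangle $\Delta$ as swept out by the geodesics issuing from the points of $AB$ into the triangle. More precisely, for each $p \in AB$ let $\ell(p)$ be the length of the (unique, inside $\Delta$) geodesic segment from $p$ to the opposite side, meeting $AB$ at a fixed angle that interpolates between $\alpha$ at $A$ and $\pi-\beta$... — here is where $\beta \geq \frac{\pi}{2}$ matters: when $\beta \geq \frac{\pi}{2}$ the foot of the perpendicular from $C$ to the line $AB$ lies on the segment (or at $B$), so the triangle is a union of graphs over $AB$ of geodesics hitting $AB$ orthogonally near $B$; the geometry is ``monotone'' and no folding occurs. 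First I would set up this Fermi-coordinate (``base-normal'') foliation of $\Delta$ by geodesic arcs perpendicular to $AB$, so that $\area(\Delta) = \int_{AB} \int_0^{h(p)} J(p,t)\, dt\, dp$, where $J$ is the Jacobi-field density and $h(p)$ is the distance from $p$ to the opposite side along the perpendicular, and do the same construction for $\bar\Delta$ in $\HH_{\lambda_0}$, getting density $\bar J(p,t) = \cosh$ or linear profile according to $\lambda_0 < 0$ or $\lambda_0 = 0$.

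Next I would compare the two ingredients separately. For the density: since $K \leq \lambda_0$, the Jacobi equation $J'' + K J = 0$ with the same initial data (here $J(p,0)=1$, $J'(p,0) = $ geodesic curvature of $AB$ at $p$, which is $0$ since $AB$ is a geodesic) gives, by the standard Sturm comparison / Rauch argument, $J(p,t) \geq \bar J(p,t)$ for all $t$ up to the first focal point — and nonpositive curvature guarantees no focal points occur before the opposite side. For the height: the claim is $h(p) \geq \bar h(p)$ for the corresponding points, which should follow from comparing the two opposite sides $CA\cup CB$ versus $\bar C\bar A \cup \bar C\bar B$; here the hypotheses (same length $AB$, same angles at $A$ and $B$) pin down the endpoints, and the CAT($\lambda_0$)-type convexity of the distance function (or simply the triangle comparison for the sub-triangles cut off by each perpendicular) forces the model opposite side to lie ``below'' the actual one. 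Combining $J \geq \bar J$ pointwise with $h \geq \bar h$ and integrating yields $\area(\Delta) \geq \area(\bar\Delta)$.

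For the equality case, I would trace back: equality in the integrated inequality forces $J(p,t) = \bar J(p,t)$ a.e., hence (by the strictness in Sturm comparison) $K \equiv \lambda_0$ along every perpendicular geodesic, so $K \equiv \lambda_0$ on $\Delta$; it also forces $h(p) = \bar h(p)$ everywhere, so the opposite sides match. A surface of constant curvature $\lambda_0$ with the same base, same two adjacent angles, and same opposite-side geometry is then isometric to the model, giving $\Delta \cong \bar\Delta$. The main obstacle I anticipate is making the base-normal foliation legitimate and the map $p \mapsto \bar p$ well defined: one must check that the perpendicular geodesics from $AB$ genuinely sweep out all of $\Delta$ without overlapping (this is exactly what $\beta \geq \frac{\pi}{2}$, together with $K \leq 0$ hence no conjugate/focal points, buys us) and that the identification of feet on the opposite side is monotone. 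Handling the corner at $C$ (where the foliation degenerates) and the possibility that the opposite side is not smooth at $C$ requires a little care, but can be dispatched by an approximation or by splitting $\Delta$ at the perpendicular through $C$.
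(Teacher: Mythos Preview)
Your Fermi-coordinate plan has a genuine geometric error at its starting point. You write that ``when $\beta \geq \frac{\pi}{2}$ the foot of the perpendicular from $C$ to the line $AB$ lies on the segment (or at $B$)''; in fact the opposite is true. Already in the Euclidean plane, if the angle at $B$ equals $\frac{\pi}{2}$ then $BC \perp AB$ and the foot of $C$ is $B$, while if $\beta > \frac{\pi}{2}$ the foot lies strictly \emph{beyond} $B$, outside the segment $AB$. (The condition for the foot to lie strictly between $A$ and $B$ is $\alpha<\frac{\pi}{2}$ \emph{and} $\beta<\frac{\pi}{2}$, precisely the complementary case.) Consequently, when $\beta>\frac{\pi}{2}$ the perpendicular geodesics issuing from the \emph{segment} $AB$ do not sweep out all of $\Delta$: the region bounded by $BC$ and by the normal at $B$ is missed. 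Extending to the full geodesic line through $A,B$ does not immediately rescue the argument, because then $\Delta$ and $\bar\Delta$ project onto intervals $[A,H]$ and $[\bar A,\bar H]$ of a priori different lengths, and you have given no mechanism to compare them.

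The second gap is the height comparison $h(p)\ge \bar h(p)$. You invoke ``CAT($\lambda_0$)-type convexity'' and ``triangle comparison for the sub-triangles cut off by each perpendicular,'' but no concrete statement is proved. Standard CAT($\lambda_0$) comparison controls distances in $\Delta$ against a model triangle with the \emph{same side lengths}; your $\bar\Delta$ has only the same base and the same two adjacent angles, and its remaining sides are in general strictly shorter. Turning that into a pointwise inequality on perpendicular heights over the base is exactly the content of the lemma, and you have not supplied it.

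For contrast, the paper's proof avoids both issues by working in polar coordinates centred at the vertex $A$ rather than in Fermi coordinates off $AB$. It introduces an intermediate comparison triangle $\hat\Delta\subset\HH_{\lambda_0}$ with the \emph{same side lengths} as $\Delta$, so that $\hat\alpha\ge\alpha$ and $\hat\beta\ge\beta\ge\frac{\pi}{2}$, whence $\hat\gamma\le\frac{\pi}{2}$. This last inequality is what $\beta\ge\frac{\pi}{2}$ actually buys: it guarantees that along the far side $\hat B\hat C$ the distance to $\hat A$ is monotone, so that $|\bar A\bar D|\le|\hat A\hat D|=|AD|$ for every $D$ on $BC$. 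Hence the map $\sigma=\exp_A\circ I\circ\exp_{\bar A}^{-1}$ sends $\bar\Delta$ \emph{into} $\Delta$; Rauch then makes $\sigma$ distance-nondecreasing and the area inequality follows in one line. If you want to salvage a base-normal approach you would need an analogous monotonicity/containment statement, and that is precisely what your sketch is missing.
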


\begin{proof}
Let~$\hat{\Delta}$ be a geodesic triangle of~$\HH_{\lambda_0}$ with the same side lengths as~$\Delta$.
Denote by~$\hat{A}$,~$\hat{B}$,~$\hat{C}$ the vertices
of~$\hat{\Delta}$ and by~$\hat{\alpha}$,~$\hat{\beta}$,~$\hat{\gamma}$
their angles.  We can (and will) assume that~$\hat{A} = \bar{A}$
and~$\hat{B}=\bar{B}$.  By~\cite[Proposition~II.1.7.(4)]{BH}, we have
\begin{equation}\label{eq:ab}
\begin{cases}
\hat{\alpha} \geq \alpha \\
\hat{\beta} \geq \beta.
\end{cases}
\end{equation}
In particular,~$\hat{\beta} \geq \frac{\pi}{2}$.  Since
the sum of the angles of a geodesic triangle in a nonpositively curved
surface is at most~$\pi$; see~\cite[Proposition~II.1.7.(4)]{BH}, we
derive that~$\hat{\gamma} \leq \frac{\pi}{2}$.

The geodesic triangle of~$\HH_{\lambda_0}$ with side~$\bar{A}\bar{B}$, and angles~$\alpha$
and~$\beta$ at~$\bar{A}$ and~$\bar{B}$ can be isometrically identified
to~$\bar{\Delta}$.  It follows from the relations~\eqref{eq:ab} that
the triangle~$\bar{\Delta}$ lies in~$\hat{\Delta}$; see
Figure~\ref{fig:ABC}.  Since~$\hat{\gamma} \leq \frac{\pi}{2}$, we
deduce that
\[
|\bar{A}\bar{C}| \leq |\hat{A}\hat{C}| = |AC|.
\]
This relation holds for any point~$D$ lying in the segment~$BC$ by replacing~$\hat{C}$ with~$\hat{D}$ and~$\bar{C}$ with~$\bar{D}$ (note that~$\bar{D}$ lies in the segment~$\bar{B}\bar{C}$).

\begin{figure}[htbp!] 
\vspace{1.7cm}
\def\svgwidth{5cm}
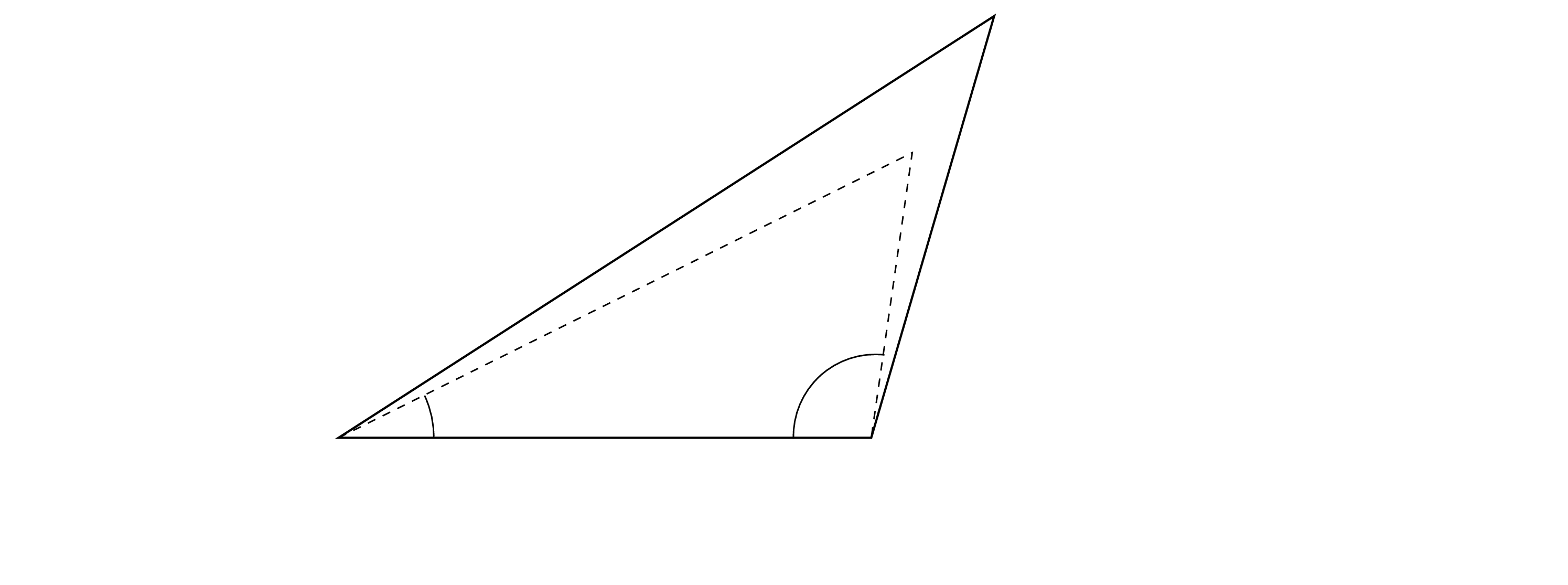 
\vspace{0.2cm}
\caption{The comparison triangles~$\bar{\Delta}$ and~$\hat{\Delta}$} \label{fig:ABC}
\end{figure}


Consider the two exponential maps $\exp_A:T_A M \to M$ and $\exp_{\bar{A}}: T_{\bar{A}} \HH_{\lambda_0} \to \HH_{\lambda_0}$.
Define $\sigma:\HH_{\lambda_0} \to M$ as
\[
\sigma = \exp_A \circ I \circ \exp_{\bar{A}}^{-1}
\]
where $I:T_{\bar{A}} \HH_{\lambda_0} \to T_A M$ is a linear isometry.
We can choose~$I$ so that the map~$\sigma$ takes the geodesic rays~$[\bar{A} \bar{B})$ and~$[\bar{A} \bar{C})$ to the geodesic rays~$[AB)$ and~$[AC)$.
By construction, the map~$\sigma$ sends every segment~$\bar{A} \bar{D}$ joining~$\bar{A}$ to a point~$\bar{D}$ in the opposite side~$\bar{B}\bar{C}$ of~$\bar{\Delta}$ to a subarc of the geodesic arc~$AD$.
In particular, the map~$\sigma$ sends~$\bar{\Delta}$ into~$\Delta$.

Since $K \leq \lambda_0$, the Rauch theorem implies that the map~$\sigma$ is distance-nondecreasing;
see~\cite[Theorem~6.5.4]{BBI}.  Therefore,
\begin{equation*} \label{eq:temp}
\area(\bar{\Delta}) \leq \area(\Delta)
\end{equation*}
with equality if and only if the map~$\sigma$ is an isometry between~$\bar{\Delta}$ and~$\Delta$.
\end{proof}

We can now derive Proposition~\ref{prop:rauch} from the previous lemma.

\begin{proof}[Proof of Proposition~\ref{prop:rauch}]
Let~$\alpha$,~$\beta$,~$\gamma$ be the angles of~$\Delta$ at~$A$,~$B$,~$C$.
The cases where $\alpha \geq \frac{\pi}{2}$ or $\beta \geq \frac{\pi}{2}$ are covered by Lemma~\ref{lem:baby}.
Thus, we can assume that~$\alpha < \frac{\pi}{2}$ and~$\beta < \frac{\pi}{2}$.
This implies that the projection~$H$ of~$C$ to the segment~$AB$ strictly lies between~$A$ and~$B$.
Furthermore, both angles~$\measuredangle{AHC}$ and~$\measuredangle{BHC}$ are equal to~$\frac{\pi}{2}$.
Thus, the height~$CH$ decomposes~$\Delta$ into two right triangles~$\Delta'$ and~$\Delta''$.
Denote by~$\gamma'$ and~$\gamma''$ the angles of~$\Delta'$ and~$\Delta''$ at~$C$.
Observe that~$\gamma = \gamma' + \gamma''$.
Since the angles of~$\Delta'$ and~$\Delta''$ at~$H$ are right, we can
apply Lemma~\ref{lem:baby} to the triangles~$\Delta'=AH'C'$
and~$\Delta''=BH''C''$, where~$H=H'=H''$ and~$C=C'=C''$.  
Thus, the areas of the triangles~$\Delta'$ and~$\Delta''$ are bounded from below by the areas of the right triangles~$\bar{\Delta}'=\bar{A}\bar{H}'\bar{C}'$ and~$\bar{\Delta}''=\bar{B} \bar{H}'' \bar{C}''$ in~$\HH_{\lambda_0}$.  
We can glue together these two right triangles of~$\HH_{\lambda_0}$ along their
sides~$\bar{H}'\bar{C}'$ and~$\bar{H}''\bar{C}''$ so that~$\bar{H}'$
and~$\bar{H}''$ coincide.  Since the angles at~$\bar{H}'$
and~$\bar{H}''$ are right, the two segments~$\bar{A} \bar{H}'$
and~$\bar{H}'' \bar{B}$ form a long segment~$\bar{A} \bar{B}$ of the
same length as~$AB$; see Figure~\ref{fig:right-triangles}.

\medskip

If the heights~$\bar{H}' \bar{C}'$ and~$\bar{H}'' \bar{C}''$ have the same length, the union of these two triangles form a large triangle which satisfies the same geometric features as~$\bar{\Delta}$ and so can be identified with~$\bar{\Delta}$.

\medskip

If one of these heights, say~$\bar{H}' \bar{C}'$, is shorter than the
other, we extend the hypothenuse~$\bar{A}\bar{C}'$ until it intersects
the other hypothenuse~$\bar{B} \bar{C}''$ at some point~$\bar{O}$; see
Figure~\ref{fig:right-triangles}.  As previously, the
triangle~$\bar{A} \bar{O} \bar{B}$ can be identified
with~$\bar{\Delta}$.  Moreover, the area of this triangle is bounded
by the sum of the two triangles~$\bar{\Delta}'$ and~$\bar{\Delta}''$.

\begin{figure}[htb] 
\vspace{-0.5cm}
\def\svgwidth{4cm}
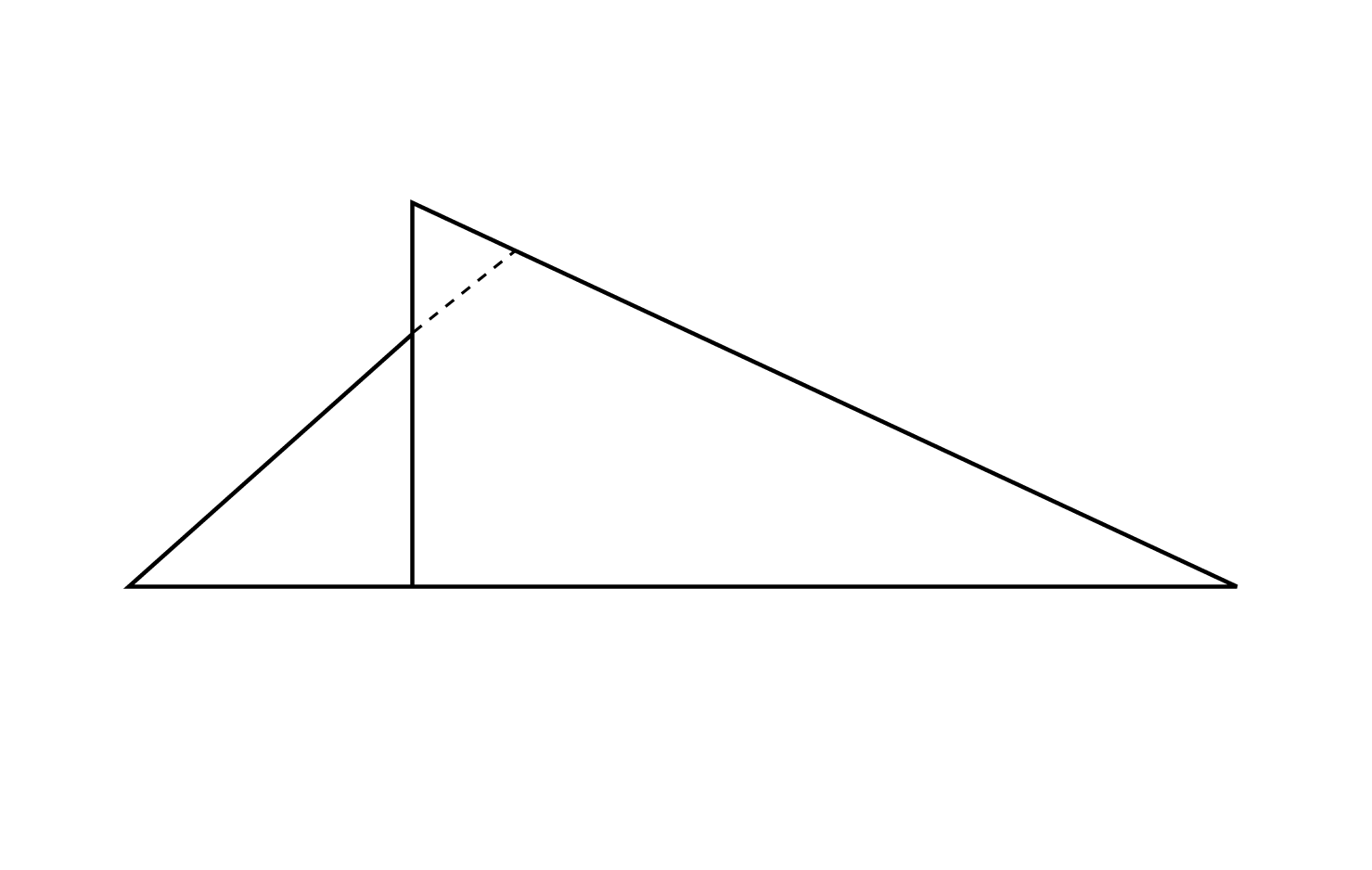
\vspace{0.3cm}
\caption{The two triangles~$\bar{\Delta}'$ and~$\bar{\Delta}''$} \label{fig:right-triangles}
\end{figure}

\medskip

In either case, we have
\[
\area(\bar{\Delta}) \leq \area(\bar{\Delta}') + \area(\bar{\Delta}'') \leq \area(\Delta)
\]
with equality if and only if~$\Delta$ is isometric to~$\bar{\Delta}$. 
\end{proof}

\section{Area comparison for triangles with the same side lengths and angles}

We can now prove our second main result.

\begin{theorem}  \label{theo:areaT}
Let~$\Delta$ be a geodesic (two-dimensional) triangle in a surface with a complete Riemannian metric of Gaussian curvature~$K \leq \lambda_0$ for some constant~$\lambda_0 \leq 0$.
Suppose~$\bar{\Delta}$ is a geodesic (two-dimensional) triangle with the same side lengths and the same angles~$\alpha$,~$\beta$,~$\gamma$ as~$\Delta$ in the cone~$\mathcal{C}_{\lambda_0}^\theta$ of constant curvature~$\lambda_0$ with angle~$\theta = 3 \pi -(\alpha+\beta+\gamma)$.  
Then
\begin{equation} \label{eq:areaT}
\area(\Delta) \geq \area(\bar{\Delta})
\end{equation}
with equality if and only if~$\Delta$ is isometric to~$\bar{\Delta}$.
\end{theorem}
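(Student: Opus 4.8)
The plan is to reduce Theorem~\ref{theo:areaT} to Proposition~\ref{prop:rauch} by splitting the triangle~$\Delta$ along a geodesic segment from one vertex, so that each piece has a base along which a planar comparison triangle can be built, and then reassemble the planar pieces inside the cone~$\mathcal{C}_{\lambda_0}^\theta$. The key point to understand first is the role of the cone angle~$\theta = 3\pi - (\alpha+\beta+\gamma)$: since $K \le \lambda_0$ forces $\alpha+\beta+\gamma \le \pi$ (for a genuine geodesic triangle, strictly less unless degenerate), we get $\theta \ge 2\pi$, so $\mathcal{C}_{\lambda_0}^\theta$ has nonpositive total curvature at the vertex. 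A triangle $\bar\Delta$ in this cone \emph{containing the vertex in its interior} has angle sum $\alpha + \beta + \gamma + (\theta - 2\pi) = \pi$ at its three corners plus the "excess" $\theta - 2\pi$ absorbed at the cone point; this is exactly the amount of angle defect one loses when gluing two planar pieces, and it explains why the cone, rather than the plane $\HH_{\lambda_0}$, is the correct comparison space.

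First I would pick the vertex of $\bar\Delta$ — say $\bar{C}$, opposite the longest side — through which the cone point should pass, and draw in $\Delta$ the geodesic from $C$ to the foot $H$ of the perpendicular (or more robustly, a cevian to a well-chosen interior point $D$ of the opposite side $AB$) splitting $\Delta$ into two triangles $\Delta_1 = ACD$ and $\Delta_2 = BCD$ with a right angle, or at least a pair of supplementary angles, at $D$. Then I would apply Proposition~\ref{prop:rauch} to each of $\Delta_1$ and $\Delta_2$, using the base $CD$ (whose length is common to both pieces) and the adjacent angles, to obtain planar comparison triangles $\bar{\Delta}_1, \bar{\Delta}_2$ in $\HH_{\lambda_0}$ with $\area(\Delta_i) \ge \area(\bar\Delta_i)$. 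Next I would glue $\bar\Delta_1$ and $\bar\Delta_2$ along their common side (the image of $CD$): the two angles at the glued copies of $C$ add up to $\gamma$ by construction, while the two angles at the glued copies of $D$ add up to the relevant angle along $AB$; carrying out this gluing in the cone $\mathcal{C}_{\lambda_0}^\theta$ with the cone point placed at the glued vertex corresponding to $C$ produces a geodesic triangle in the cone with side lengths $|AB|, |BC|, |CA|$ and vertex angles $\alpha, \beta, \gamma$ — hence isometric to $\bar\Delta$ — and with area equal to $\area(\bar\Delta_1) + \area(\bar\Delta_2)$, because the cone is flat-glued away from the vertex and the vertex is an interior point. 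This is the same reassembly device already used at the end of the proof of Proposition~\ref{prop:rauch}, now run in the cone instead of the plane.

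The main obstacle is the bookkeeping of which comparison triangle lives where, and in particular verifying that the glued planar pieces, once transplanted into the cone, actually assemble into a \emph{convex} geodesic triangle with the cone point in its interior and the prescribed data — i.e. that the angle accounting works out to $\theta - 2\pi$ of "extra" angle precisely at the glued vertex and that no side overshoots or the configuration folds over. One has to check that the vertex $C$ of $\bar\Delta$ is the right choice (taking it opposite the longest side guarantees the relevant projection foot lies strictly inside, so both sub-triangles are honest and the cevian is interior), and that the inequality $\area(\bar\Delta) \le \area(\bar\Delta_1) + \area(\bar\Delta_2)$ in the cone is in fact an equality once the cone point is correctly positioned — here I would invoke the same "extend the shorter height until the hypotenuses meet" comparison as in the proof of Proposition~\ref{prop:rauch}, noting that in the cone the two hypotenuses meet exactly at the cone vertex. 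Chaining the inequalities gives $\area(\Delta) = \area(\Delta_1) + \area(\Delta_2) \ge \area(\bar\Delta_1) + \area(\bar\Delta_2) = \area(\bar\Delta)$. Finally, for the equality case: equality forces equality in each application of Proposition~\ref{prop:rauch}, hence each $\Delta_i$ is isometric to $\bar\Delta_i$, and then the isometries of the pieces match along $CD$ and patch to a global isometry $\Delta \to \bar\Delta$; the rigidity in Proposition~\ref{prop:rauch} (ultimately the rigidity in Rauch's theorem) is what makes this work.
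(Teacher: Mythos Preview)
Your two-piece decomposition has a genuine gap: applying Proposition~\ref{prop:rauch} to the sub-triangles $\Delta_1 = ACD$ and $\Delta_2 = BCD$ with base~$CD$ controls only the length $|CD|$ and the four angles adjacent to~$CD$ (namely $\gamma_1,\gamma_2$ at~$C$ and $\delta_1,\delta_2$ at~$D$). It does \emph{not} control the lengths $|CA|,\ |CB|,\ |DA|,\ |DB|$ nor the angles at the opposite vertices~$A$ and~$B$. Consequently, when you glue $\bar{\Delta}_1$ and $\bar{\Delta}_2$ along the image of~$CD$, the resulting triangle has angle~$\gamma$ at~$\bar C$ and a straight edge at~$\bar D$, but its remaining two angles are some $\bar\alpha \geq \alpha$ and $\bar\beta \geq \beta$ (by the angle comparison), and its two remaining side lengths need not be $|CA|$ and $|CB|$. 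So the glued object is \emph{not} the comparison triangle~$\bar\Delta$ of the theorem, and the chain of inequalities does not terminate at $\area(\bar\Delta)$. Placing the cone point at the vertex~$\bar C$ does not fix this: it leaves the interior geometry of the glued figure unchanged (the cone is locally isometric to~$\HH_{\lambda_0}$ away from the vertex), and in any case the $\bar\Delta$ of the statement has the cone point in its \emph{interior}, not at a vertex.

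The paper resolves exactly this bookkeeping problem by using a \emph{three}-piece decomposition dictated by~$\bar\Delta$ itself. The segments from the cone point to the three vertices of~$\bar\Delta$ split it into three sub-triangles $\bar\Delta_a,\bar\Delta_b,\bar\Delta_c$, and simultaneously split each vertex angle as $\alpha=\alpha'+\alpha''$, etc. One then draws in~$\Delta$ the three cevians from~$A,B,C$ making these same angle splits; they carve~$\Delta$ into three triangles $\Delta_a,\Delta_b,\Delta_c$ (plus possibly a small leftover region~$X$). Now each $\Delta_a$ has a full side of~$\Delta$ as its base, with the correct adjacent half-angles, so Proposition~\ref{prop:rauch} compares it directly to~$\bar\Delta_a$. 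Summing gives $\area(\bar\Delta)=\sum\area(\bar\Delta_\bullet)\le\sum\area(\Delta_\bullet)\le\area(\Delta)$. The three-piece split is what lets all three side lengths and all three angles survive the comparison; a two-piece split along a cevian cannot carry that much data through Proposition~\ref{prop:rauch}.
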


\begin{remark}
Unlike the situation with Proposition~\ref{prop:rauch}, the comparison
triangle~$\bar{\Delta}$ in~$\mathcal{C}_{\lambda_0}^\theta$ in Theorem~\ref{theo:areaT} do not necessarily
exist in general.
%
\end{remark}

\begin{proof}[Proof of Theorem~\ref{theo:areaT}]
Let~$A$,~$B$ and~$C$ be the vertices of~$\Delta$, with angle $\alpha$
at the vertex~$A$, angle $\beta$ at~$B$, and angle $\gamma$ at~$C$.
Denote by~$\bar{A}$,~$\bar{B}$,~$\bar{C}$ the corresponding vertices
of~$\bar{\Delta}$.
%

\medskip

By~\cite[Proposition~II.1.7.(4)]{BH}, the angle~$\theta$ of the
conical singularity of the cone is at least~$2 \pi$.  Moreover, we can
assume that~$\theta > 2\pi$ and that the conical singularity lies
in~$\bar{\Delta}$.  Otherwise, the sum~$\alpha+ \beta +\gamma$ of the
angles of~$\Delta$ and~$\bar{\Delta}$ would be equal to~$\pi$ and so
the triangle~$\Delta$ would be flat isometric to~$\bar{\Delta}$
by~\cite[Proposition~II.2.9]{BH}.

\medskip 

The geodesic rays joining the vertices of~$\bar{\Delta}$ to its conical singularity decompose each angle around the vertices of~$\bar{\Delta}$ into two angles.
In particular, the angle~$\alpha$ splits into two angles~$\alpha'$ and~$\alpha''$, where~$\alpha = \alpha' + \alpha''$.
The same holds with~$\beta$ and~$\gamma$.
Observe that these geodesic rays decompose~$\bar{\Delta}$ into three small triangles.

\medskip

We would like to carry out a similar construction for~$\Delta$, except
that there is no conical singularity to rely on.  Instead, we consider
the geodesic rays of~$\Delta$ emanating from the vertices of~$\Delta$
and splitting each angle as in~$\bar{\Delta}$.  These three geodesic
rays do not necessarily intersect at a single point as
in~$\bar{\Delta}$.  Nevertheless, they decompose~$\Delta$ into three
triangles~$\Delta_a$,~$\Delta_b$,~$\Delta_c$ as in
Figure~\ref{fig:three-triangles}.  Recall that two geodesic rays in a
nonpositively curved surface intersect at most once.  There may be a
small triangular region
\[
X = \Delta \setminus (\Delta_a \cup \Delta_b \cup \Delta_c)
\]
lying in~$\Delta$ which is not
covered by the triangles~$\Delta_a$,~$\Delta_b$,~$\Delta_c$.

\begin{figure}[htb] 
\vspace{-0.1cm}
\def\svgwidth{4.5cm} 
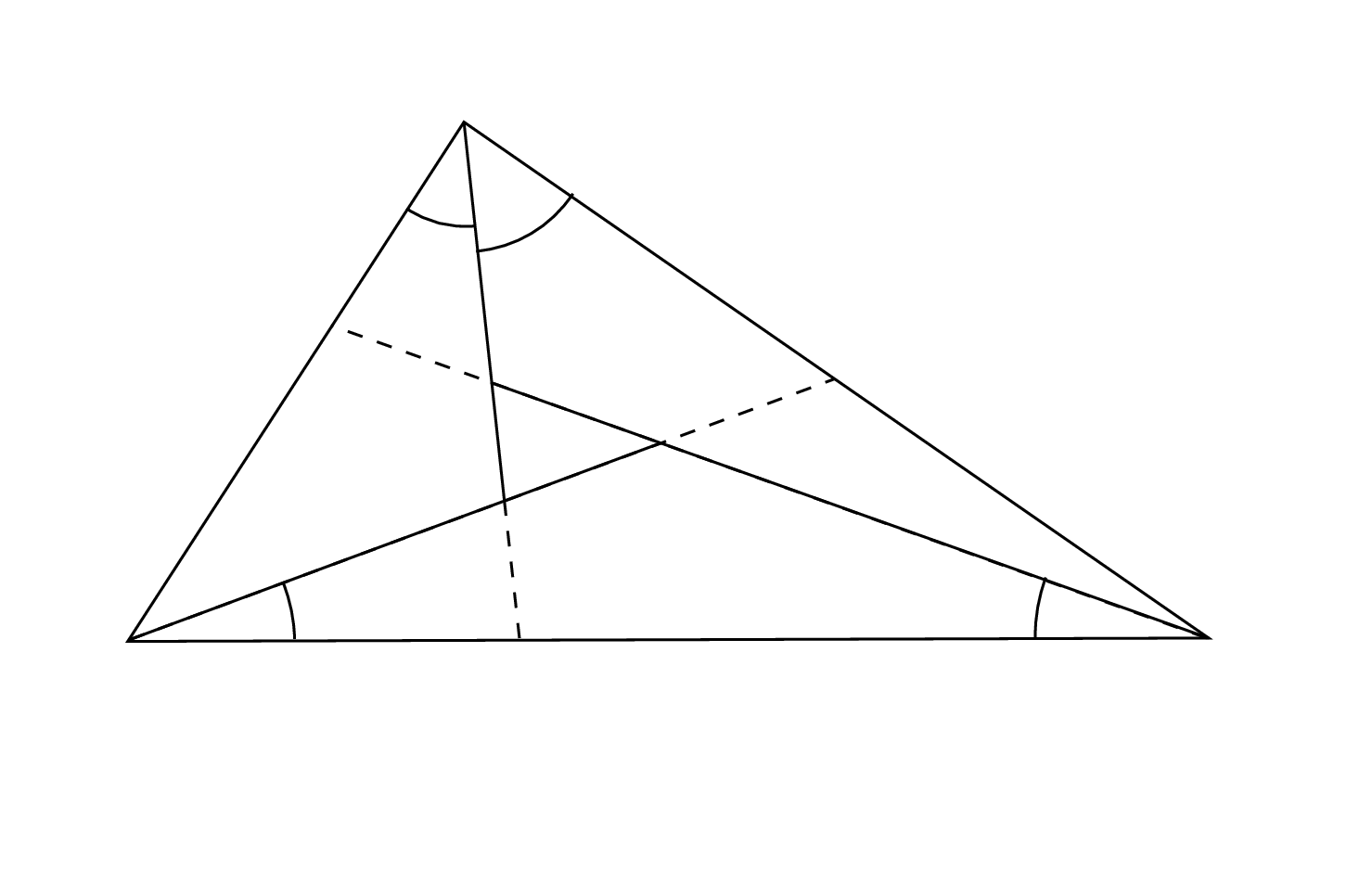 
\vspace{0.2cm}
\caption{Decomposition of~$\Delta$ into triangles} 
\label{fig:three-triangles}
\end{figure}

Let~$A'$ be the vertex of~$\Delta_a$ different from~$B$ and~$C$.
Denote by~$\bar{\Delta}_a$ the triangle of~$\mathcal{C}_{\lambda_0}^\theta$ with vertices~$\bar{A}'$,~$\bar{B}$,~$\bar{C}$ such that
\begin{itemize}
\item the sides~$\bar{B} \bar{C}$ and~$BC$ have the same length;
\item the angles at~$\bar{B}$ and~$B$ are the same (equal to~$\beta''$);
\item the angles at~$\bar{C}$ and~$C$ are the same (equal to~$\gamma'$).
\end{itemize}
Similarly, we define~$\bar{\Delta}_b$ and~$\bar{\Delta}_c$.  By
construction, the three triangles~$\bar{\Delta}_a$,~$\bar{\Delta}_b$
and~$\bar{\Delta}_c$ are isometric to the three smaller triangles
forming~$\bar{\Delta}$ and delimited by the geodesic segments joining
the conical singularity of~$\bar{\Delta}$ to its vertices.  By
Proposition~\ref{prop:rauch}, we have
\[
\area(\bar{\Delta}_a) \leq \area(\Delta_a)
\]
with equality if and only if~$\Delta_a$ is isometric
to~$\bar{\Delta}_a$.  The same holds with~$\bar{\Delta}_b$
and~$\bar{\Delta}_c$.  Since the triangle~$\bar{\Delta}$ is
partitioned into~$\bar{\Delta}_a$,~$\bar{\Delta}_b$
and~$\bar{\Delta}_c$, we derive that
\[
\begin{aligned}
\area(\bar{\Delta}) & = \area(\bar\Delta_a) + \area(\bar\Delta_b) +
\area(\bar\Delta_c) \\&\leq \area(\Delta_a) + \area(\Delta_b) +
\area(\Delta_c) \\&=\area(\Delta)-\area(X)\\& \leq\area(\Delta)
\end{aligned}
\]
with equality if and only if~$\Delta$ is isometric to~$\bar{\Delta}$.
\end{proof}

\begin{remark}
It follows from the proof of Theorem~\ref{theo:areaT} that the
difference between $\area(\Delta)$ and~$\area(\bar{\Delta})$ is
bounded from below by the area of the small triangle $X= \Delta
\setminus (\Delta_a \cup \Delta_b \cup \Delta_c)$ lying in~$\Delta$.
\end{remark}

\end{document}